\documentclass[12pt,reqno]{amsart}
\mathsurround=3pt
\parindent=20pt
\usepackage{amsmath}
\usepackage{amssymb}
\baselineskip=1.5in
\textheight 650pt
\textwidth 400pt
\topmargin0pt

\newtheorem{thm}{Theorem}

\theoremstyle{definition}

\newcommand{\vertiii}[1]{{\left\vert\kern-0.25ex\left\vert\kern-0.25ex\left\vert
#1 \right\vert\kern-0.25ex\right\vert\kern-0.25ex\right\vert}}

\def \lim   {\text {\rm lim}}

\def \qed   {\hfill \vrule height6pt width 6pt depth 0pt}
\newcommand{\q}[1]{``#1''}

\begin{document}

\title[]{A Schur-Horn Theorem for symplectic eigenvalues}

\author[Rajendra Bhatia]{Rajendra Bhatia}
\address{Ashoka University, Sonepat\\ Haryana, 131029, India}

\email{rajendra.bhatia@ashoka.edu.in}

\author{Tanvi Jain}

\address{Indian Statistical Institute, New Delhi 110016, India}

\email{tanvi@isid.ac.in}

\begin{abstract}
Let $x$ and $y$ be positive $n$-vectors. We show that there exists a $2n\times 2n$ positive definite real matrix whose symplectic spectrum is $y,$ and the symplectic spectrum of whose diagonal is $x$ if and only if $x$ is weakly supermajorised by $y.$
\end{abstract}

\thanks{The work of RB is supported by a Bhatnagar Fellowship of the CSIR. 
TJ acknowledges financial support from {\sf SERB
MATRICS grant number MTR/2018/000554}.}

\subjclass[2010]{ 15A45, 15B48}

\keywords{symplectic eigenvalues, majorisation, Schur-Horn theorem, diagonal.}

\maketitle

\section{Introduction}
Let $A$ be an $n\times n$ Hermitian matrix, and let $\Delta(A)$ and $\lambda(A)$ be the real $n$-vectors whose entries are the diagonal entries and the eigenvalues of $A,$ respectively. The celebrated Schur-Horn theorem gives a relationship between $\Delta(A)$ and $\lambda(A).$ This is described in terms of majorisation.\\\par
Let $x=(x_1,\ldots,x_n)$ be a vector in $\mathbb{R}^n.$ The co-ordinates of $x$ rearranged in decreasing order will be denoted by $x_{1}^{\downarrow} \geq \cdots \geq x_{n}^{\downarrow}$ and the same numbers listed in increasing order will be denoted as $x_{1}^{\uparrow} \leq \cdots \leq x_{n}^{\uparrow}.$ Let $x,y$ be two vectors in $\mathbb{R}^n.$ We say that $x$ is \emph{weakly submajorised} by $y,$ in symbols $x \prec_w y,$ if 
\begin{equation*}\label{eq1}
    \sum_{j=1}^{k} x_{j}^{\downarrow} \leq \sum_{j=1}^{k} y_{j}^{\downarrow}, \text{ for } 1\leq k \leq n.
\end{equation*}
In addition if, \begin{equation*}\label{eq2}
\sum_{j=1}^{n} x_{j}^{\downarrow} = \sum_{j=1}^{n} y_{j}^{\downarrow},
\end{equation*}
we say that $x$ is majorised by $y,$ in symbols $x\prec y.$ The book \cite{mo} provides an encyclopedic coverage of majorisation; a brief treatment is given in \cite{rbh}.\\\par
In 1923, I. Schur \cite{s} showed that for every Hermitian matrix $A,$ we have the majorisation $\Delta(A) \prec \lambda(A).$ In 1954, A. Horn \cite{h} proved the converse: if $x,y$ are two real $n$-vectors, with $x\prec y$ then there exists a real symmetric matrix A such that $x=\Delta(A)$ and $y=\lambda(A).$ See \cite{mo} Sect. 9.B.\\\par
One of the basic theorems on majorisation says that $x\prec y$ if and only if $x$ is in the convex polytope whose vertices are the vectors $y_{\sigma}$ obtained by permuting the coordinates of $y$ according to the permutation $\sigma.$ Using this, the Schur-Horn theorem may be reformulated as follows: Let $\lambda$ be a vector in $\mathbb{R}^n$ and $\Lambda$ the diagonal matrix with diagonal $\lambda.$ Let $U(n)$ be the group of unitary matrices and let $\mathcal{O}_{\lambda}=\{U\Lambda U^*:U \in U(n)\},$ the unitary orbit of $\Lambda.$ (This is the collection of all Hermitian matrices whose eigenvalues are $\lambda.$) Then the range of the map $\Delta: \mathcal{O}_{\lambda} \rightarrow \mathbb{R}^n$ coincides with the polytope whose vertices are the vectors $\lambda_\sigma.$ In this form, the theorem has been extended to the more general setting of Lie groups starting with B. Kostant \cite{k}, followed by \cite{a} and \cite{g}. \\\par
The goal of this paper is to formulate and prove a Schur-Horn like theorem for the action of the real symplectic group on real positive definite matrices. \\\par
Let $\mathbb{M}(2n)$ be the space of $2n\times 2n$ real matrices and let $\mathbb{P}(2n)$ be the subset consisting of real positive definite matrices. Let $J=\begin{bmatrix}
0 & I\\
-I & 0
\end{bmatrix}$
 and let $Sp(2n)=\{M\in\mathbb{M}(2n):M^TJM=J\}.$ This is the Lie group of real symplectic matrices. By a theorem of Williamson \cite{dms, w} for every real positive definite matrix $A,$ there exists a symplectic matrix $M$ such that 
 \begin{equation*}\label{eq3}
     M^TAM=\begin{bmatrix}
D & 0\\
0 & D
\end{bmatrix},
 \end{equation*}
 where $D$ is a positive diagonal matrix. The diagonal entries of $D$ are enumerated as $$d_1(A) \leq \cdots \leq d_n(A),$$ and are called the \emph{symplectic eigenvalues} of $A.$ They are uniquely determined by $A$ and are complete invariants for the orbits of $\mathbb{P}(2n)$ under the action of the group $Sp(2n).$ The matrix $A^{1/2}JA^{1/2}$ is skew-symmetric, and so its eigenvalues occur in conjugate imaginary pairs. The absolute values of these are the symplectic eigenvalues $d_j(A),$ $1\leq j\leq n.$ See \cite{bj}\.\\\par
 We denote the $n$-vector of symplectic eigenvalues of $A$ by $d_s(A).$ Let $\Delta(A)$ be the diagonal of $A$ and let $\Delta_s(A)=d_s(\Delta(A)).$ If the matrix $A$ is partitioned into $n\times n$ blocks as $A=\begin{bmatrix}
A_{11} & A_{12}\\
A_{21} & A_{22}
\end{bmatrix},$ then $\Delta_s(A)=[\Delta(A_{11})\Delta(A_{22})]^{1/2}.$
(Here both the product of two vectors and the square root are taken entrywise.)
This can be seen from the relationship between symplectic eigenvalues of $A$ and the eigenvalues of $JA.$ In our version of the Schur-Horn theorem we compare $\Delta_s(A)$ and $d_s(A).$ One major difference from the case of the classical Schur-Horn theorem is that while the unitary group $U(n)$ is compact, the group $Sp(2n)$ is not. \\\par
 Let $x,y \in \mathbb{R}^n.$
We say $x$ is \emph{weakly supermajorised} by $y,$ in symbols $x \prec^w y,$ if
 \begin{equation*}\label{eq4}
    \sum_{j=1}^{k} x_{j}^{\uparrow} \geq \sum_{j=1}^{k} y_{j}^{\uparrow}, \text{ for } 1\leq k \leq n.
\end{equation*}
It is easy to see that $x$ is majorised by $y$ if and only if it is both weakly submajorised and supermajorised by $y.$\\\\
Our symplectic version of the Schur-Horn theorem is the following:

\begin{thm}\label{thm1}
Let $A$ be any $2n\times 2n$ real positive definite matrix. Then we have the weak majorisation \begin{equation}\label{eq5}
\Delta_s(A)\prec^w d_s(A).
\end{equation}
Conversely, if $x,y$ are two $n$ vectors with positive entries such that $x\prec^w y,$ then there exists a $2n\times 2n$ real positive definite matrix $A$ such that $x=\Delta_s(A)$ and $y=d_s(A).$
\end{thm}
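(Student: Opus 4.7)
I would prove the two halves of Theorem~\ref{thm1} by quite different routes.

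\textbf{The forward inequality.} My plan for establishing $\Delta_s(A)\prec^w d_s(A)$ is to use a Ky Fan--type variational description of partial sums of symplectic eigenvalues,
$$2\sum_{j=1}^k d_j^{\uparrow}(A) \;=\; \min\Bigl\{\tr(M^T A M) : M\in \mathbb{M}(2n,2k),\ M^T J_{2n} M = J_{2k}\Bigr\},$$
the symplectic analogue of the Ky Fan identity $\sum_{j=1}^k \lambda_j^{\uparrow}(H) = \min\tr(V^T H V)$ taken over isometries $V$. This follows from Williamson's theorem applied to restrictions of $A$ to $2k$-dimensional symplectic subspaces, and I would either cite it from the literature on symplectic eigenvalues (e.g., \cite{bj}) or record a separate lemma. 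Armed with it, I would test against coordinate--supported $M$: for any $k$-subset $\{i_1,\ldots,i_k\}\subseteq\{1,\ldots,n\}$ and positive scalars $\alpha_1,\ldots,\alpha_k$, let columns $1,\ldots,k$ of $M$ be $\alpha_j e_{i_j}$ and columns $k+1,\ldots,2k$ be $\alpha_j^{-1}e_{n+i_j}$. The symplectic condition $M^T J M = J_{2k}$ is immediate, and
$$\tr(M^T A M) \;=\; \sum_{j=1}^k\Bigl(\alpha_j^2 A_{i_j i_j} + \alpha_j^{-2} A_{n+i_j,n+i_j}\Bigr).$$
Minimising over the $\alpha_j$ via AM--GM gives $2\sum_j\sqrt{A_{i_j i_j}A_{n+i_j,n+i_j}}$. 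Taking the $i_j$ to index the $k$ smallest entries of $\Delta_s(A)$ then yields $\sum_{j=1}^k d_j^{\uparrow}(A) \le \sum_{j=1}^k\Delta_s(A)_j^{\uparrow}$, which is exactly the required weak supermajorisation.

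\textbf{The converse.} For the realisation statement I would combine Horn's classical theorem with a single symplectic shear. The preliminary input is the standard majorisation lemma (cf.\ Marshall--Olkin \cite{mo}) that a weak supermajorisation between positive vectors can be tightened into an ordinary majorisation by shrinking: one can find $x'\in\mathbb{R}_{>0}^n$ with $x'\leq x$ coordinatewise (in a common arrangement) and $x'\prec y$. Horn's theorem then produces a real symmetric matrix $B$ with spectrum $y$ and diagonal $x'$; positivity of $y$ forces $B\in\mathbb{P}(n)$. Set $A_0 := B\oplus B\in\mathbb{P}(2n)$. A direct computation of $A_0^{1/2}JA_0^{1/2} = \bigl[\begin{smallmatrix}0&B\\-B&0\end{smallmatrix}\bigr]$ shows $d_s(A_0) = \lambda(B) = y$, and trivially $\Delta_s(A_0)_i = x'_i$. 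Now let $S := \diag(s_1,\ldots,s_n)$ with $s_i := \sqrt{(x_i/x'_i)^2-1}\ge 0$, and set $M := \bigl[\begin{smallmatrix} I & S\\ 0 & I\end{smallmatrix}\bigr]\in Sp(2n)$. The congruence $A := M^T A_0 M$ preserves $d_s$ and has $(1,1)$-block $B$ and $(2,2)$-block $SBS+B$, whose respective diagonals are $x'_i$ and $(1+s_i^2)x'_i = x_i^2/x'_i$. Hence $\Delta_s(A)_i = \sqrt{x'_i\cdot x_i^2/x'_i}=x_i$ and $d_s(A)=y$, as required.

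\textbf{Where I expect the difficulty.} The forward half is routine once the partial-sum variational identity is in hand, and the one subtle point there is that $Sp(2n)$ is non-compact, so attainment of the minimum (as opposed to mere infimum) is not automatic and must be verified through Williamson's theorem on principal symplectic subspaces. For the converse, the construction itself is essentially a single line, but its legitimacy rests on producing a strictly positive $x'$ in the majorisation lemma; without this the shear $S$ would be undefined. Positivity of $y$ prevents any $x'_i = 0$ (the vertices of the majorisation polytope of $y$ are strictly positive), but this is the step that deserves explicit care.
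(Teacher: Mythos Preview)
Your proof is correct; both halves go through as written. The converse is essentially the paper's own argument: both of you pass from $x\prec^w y$ to a vector $z\prec y$ with $z\le x$, invoke Horn's theorem to build a real symmetric (hence positive definite) $B$ with spectrum $y$ and diagonal $z$, form $B\oplus B$, and then apply a diagonal-block symplectic matrix to push $\Delta_s$ from $z$ up to $x$. The only difference is cosmetic: the paper uses a general $M=\bigl[\begin{smallmatrix}P&Q\\R&S\end{smallmatrix}\bigr]$ with diagonal blocks satisfying $PS-QR=I$ and appeals to connectedness of $SL(2,\mathbb{R})$ to hit the required ratios, whereas your explicit shear $\bigl[\begin{smallmatrix}I&S\\0&I\end{smallmatrix}\bigr]$ with $s_i=\sqrt{(x_i/z_i)^2-1}$ is the same device made constructive. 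Your attention to the strict positivity of $z$ (needed to define $S$) is well placed and is a point the paper leaves implicit.

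For the forward inequality the routes genuinely differ. The paper does not argue directly: it quotes the known supermajorisation $d_s(\mathfrak{D}_s(A))\prec^w d_s(A)$ for the \emph{symplectic diagonal} $\mathfrak{D}_s(A)$ (the $2n\times 2n$ matrix retaining the diagonals of all four $n\times n$ blocks), and then observes the entrywise inequality $d_s(\mathfrak{D}_s(A))_j=(\alpha_j\gamma_j-\beta_j^2)^{1/2}\le(\alpha_j\gamma_j)^{1/2}=\Delta_s(A)_j$. Your approach via the Ky~Fan--type identity tested against coordinate-supported symplectic isometries, with AM--GM over the scalings $\alpha_j$, is more direct and avoids the auxiliary object $\mathfrak{D}_s(A)$ altogether. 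One small remark: your stated worry about attainment of the minimum is unnecessary here, since your argument uses only the lower-bound half of the variational principle (that $2\sum_{j\le k}d_j^\uparrow(A)\le \tr(M^TAM)$ for every admissible $M$); whether the infimum is achieved is irrelevant.
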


Just like the classical Schur-Horn theorem, this theorem can be reformulated as a convexity statement. The convex polytope of the classical theorem is replaced by an unbounded convex set. This is a manifestation of the noncompactness of the group $Sp(2n).$ From the theory of majorisation we know that $x\prec^w y$ if and only if there exists a vector $z$ such that $z\prec y$ and $z\leq x.$ See \cite{mo}, Ch.5, Sect. A.9.a. Thus 
\begin{equation}\label{eq6}
    \{x:x\prec^w y\} = \bigcup_{z\prec y}\{x:z\leq x\}.
\end{equation}
This is an unbounded convex set, and can be visualized as follows. Let $S$ be the convex polytope whose vertices are the permuted vectors $y_\sigma.$ At each point of $S$ attach a copy of the positive orthant $\mathbb{R}^{n}_{+}.$ The set in \eqref{eq6} is the union of all these. We denote this set by $\Sigma_y.$ Then, Theorem 1 can be restated as:

\begin{thm}\label{thm2}
Let $d=(d_1,\ldots,d_n)$ be a positive $n$-vector and $D$ the diagonal matrix with $d_1,\ldots,d_n$ on its diagonal. Let $$\mathcal{O}_d=\{M\begin{bmatrix}
D & 0\\
0 & D 
\end{bmatrix}M^T: M \in Sp(2n)\},$$ be the symplectic orbit of $\begin{bmatrix}
D & 0\\
0 & D 
\end{bmatrix}.$ (This is the collection of all real positive definite matrices with symplectic eigenvalues $d_1,\ldots,d_n.$) Then the range of the map $\Delta_s: \mathcal{O}_d \rightarrow \mathbb{R}^{n}$ is the convex set $\sum_d.$
\end{thm}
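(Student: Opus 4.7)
The plan is to deduce Theorem \ref{thm2} directly from Theorem \ref{thm1} together with the description of $\Sigma_d$ given in \eqref{eq6}. So the task is essentially bookkeeping: to check that the convex-set formulation is literally equivalent to the majorisation formulation.

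First I would unwind the definitions. By \eqref{eq6}, $\Sigma_d = \{x \in \mathbb{R}^n : x \prec^w d\}$. Any $x$ in this set automatically has positive entries: since $d$ has positive entries, every permutation $d_\sigma$ is positive; hence so is every convex combination, so any $z$ with $z \prec d$ is positive; if in addition $z \leq x$, then $x$ is positive as well. Next, Williamson's theorem tells us that $\mathcal{O}_d$ is exactly the set of real positive definite $2n\times 2n$ matrices $A$ with $d_s(A)=d$; this was essentially noted right after the theorem was defined.

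For the inclusion $\Delta_s(\mathcal{O}_d) \subseteq \Sigma_d$, I would argue: given $A \in \mathcal{O}_d$ we have $d_s(A)=d$, and the direct half of Theorem \ref{thm1} yields $\Delta_s(A) \prec^w d_s(A) = d$, so $\Delta_s(A) \in \Sigma_d$. For the reverse inclusion $\Sigma_d \subseteq \Delta_s(\mathcal{O}_d)$, I would take $x \in \Sigma_d$, use the observation above to conclude $x$ has positive entries, and then apply the converse half of Theorem \ref{thm1} to produce a real positive definite $A$ with $\Delta_s(A) = x$ and $d_s(A) = d$. The identity $d_s(A)=d$ places $A$ in $\mathcal{O}_d$, so $x$ is in the image of the map $\Delta_s$.

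There is no genuine obstacle here; the only points requiring care are (a) verifying that elements of $\Sigma_d$ have positive entries, since the converse direction of Theorem \ref{thm1} is only stated for positive vectors, and (b) identifying $\mathcal{O}_d$ with the locus $\{A \in \mathbb{P}(2n) : d_s(A) = d\}$, which is Williamson's theorem. The substantive content — both the inequality $\Delta_s(A) \prec^w d_s(A)$ and its converse — is packaged inside Theorem \ref{thm1}, so Theorem \ref{thm2} is truly a reformulation rather than a new statement to be proved from scratch.
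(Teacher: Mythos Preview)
Your proposal is correct and matches the paper's treatment: the paper explicitly presents Theorem~\ref{thm2} as a restatement of Theorem~\ref{thm1} (via the description of $\Sigma_d$ in \eqref{eq6}) and gives no separate proof for it, so the bookkeeping you describe is exactly what is implicitly being asserted. Your added remarks on positivity of elements of $\Sigma_d$ and the identification of $\mathcal{O}_d$ via Williamson's theorem are the right points of care, and the paper takes them for granted.
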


\section{Proof of the Theorem}
The weak majorisation \eqref{eq5} is a consequence of results proved in \cite{sanders} and in our paper \cite{bj}. Let $A=\begin{bmatrix}
A_{11} & A_{12}\\
A_{12}^{T} & A_{22} 
\end{bmatrix}$ and define the \emph{symplectic diagonal} of $A$ as $$\mathfrak{D}_s(A)=\begin{bmatrix}
\Delta(A_{11}) & \Delta(A_{12})\\
\Delta(A_{12}^{T}) & \Delta(A_{22}) 
\end{bmatrix}=\begin{bmatrix}
\Delta(A_{11}) & \Delta(A_{12})\\
\Delta(A_{12}) & \Delta(A_{22}) 
\end{bmatrix}.$$ In \cite{sanders} and \cite{bj} it has been shown that \begin{equation}\label{eq7}
    d_s(\mathfrak{D}_s(A))\prec^w d_s(A).
\end{equation} If $\Delta(A_{11})=(\alpha_1,\ldots,\alpha_n), \Delta(A_{12})=(\beta_1,\ldots,\beta_n)$ and $\Delta(A_{22})=(\gamma_1,\ldots,\gamma_n),$ then one can see that $d_s(\mathfrak{D}_s(A))$ is the $n$-vector with entries $(\alpha_j\gamma_j-\beta_j^2)^{1/2},$ whereas $\Delta_s(A)$ is the $n$-vector with entries $(\alpha_j\gamma_j)^{1/2}, 1\leq j\leq n.$ This shows that \begin{equation}
    d_s(\mathfrak{D}_s(A))\leq \Delta_s(A).
\end{equation} The relation \eqref{eq5} follows from \eqref{eq7} and (8).\\\par
Now let $x,y$ be two positive vectors with $x\prec^w y.$ We have to produce a $2n\times 2n$ real positive definite matrix $A$ such that $d_s(A)=y$ and $\Delta_s(A)=x.$ Because of \eqref{eq6} we can find a vector $z$ such that $z\prec y$ and $z\leq x.$ Denote by $Y$ the diagonal matrix with diagonal entries $y_1,\ldots,y_n.$ By Horn's Theorem, there exists a real orthogonal matrix $\Omega$ such that $\Delta(\Omega Y\Omega^T)=z.$ The matrix $\begin{bmatrix}
\Omega & 0\\
0 & \Omega 
\end{bmatrix}$ is symplectic. Let $$B=\begin{bmatrix}
\Omega & 0\\
0 & \Omega 
\end{bmatrix}\begin{bmatrix}
Y & 0\\
0 & Y 
\end{bmatrix}\begin{bmatrix}
\Omega^T & 0\\
0 & \Omega^T 
\end{bmatrix}$$
Then $B$ is positive definite and $d_s(B)=y.$ The diagonal $\Delta(B)=\begin{bmatrix}
Z & 0\\
0 & Z 
\end{bmatrix},$ and therefore $\Delta_s(B)=z.$ \\\par
Let $P,Q,R,S$ be $n\times n$ diagonal matrices with \begin{equation}
    PS-QR=I.
\end{equation} Then the matrix $M=\begin{bmatrix}
P & Q\\
R & S 
\end{bmatrix}$ is symplectic. Let $A=MBM^T.$ Then $$\Delta(A)=\begin{bmatrix}
PZP+QZQ & 0\\
0 & RZR+SZS 
\end{bmatrix}.$$The vector $\Delta_s(A)$ has entries
\begin{equation}
    [(p_j^2+q_j^2)(r_j^2+s_j^2)]^{1/2}z_j , \text{ }1\leq j \leq n.
\end{equation} 
We claim that we can choose $P,Q,R,S$ subject to the constraint (9) in such a way that the expressions in (10) assume all values $x_j\geq z_j, \ 1\leq j\leq n.$ For this, consider the group $SL(2,\mathbb{R})$ consisting of $2\times 2$ real matrices $\begin{bmatrix}
p & q\\
r & s
\end{bmatrix}$ with determinant 1. On this define the function $$f\left(\begin{bmatrix}
p & q\\
r & s
\end{bmatrix}\right)=[(p^2+q^2)(r^2+s^2)]^{1/2}.$$ This is a continuous function. Since $SL(2,\mathbb{R})$ is connected and unbounded, and $f(I)=1,$ the range of $f$ contains the interval $[1,\infty).$ This establishes our claim. \\\par
With $P,Q,R,S$ as above, and $M=\begin{bmatrix}
P & Q\\
R & S
\end{bmatrix},$ the matrix $A=MBM^T$ has the desired properties: $d_s(A)=y$ and $\Delta_s(A)=x.$ This completes the proof of Theorem \ref{thm1}.
\qed

\section{Remarks}
In spite of there being several differences between symplectic eigenvalues of positive definite matrices and ordinary eigenvalues, it is remarkable that, with appropriate interpretations, many known theorems for eigenvalues of Hermitian matrices have symplectic versions. See \cite{bj}, \cite{jm} and references therein. In the formulation of our Theorem 1 we have chosen one particular interpretation for the \q{diagonal} of $A.$ Another is the $n$-vector $d_s(\mathfrak{D}_s(A)).$ Given positive vectors $x,y,$ necessary and sufficient conditions for the existence of a positive definite matrix $A$ with $d_s(A)=y$ and $d_s(\mathfrak{D}_s(A))=x$ have been found in \cite{sanders}. These conditions consist of \eqref{eq7} and one additional inequality. Formally, this is an analogue of the Sing-Thompson theorem comparing the diagonal of a matrix and its singular values, which is quite different from the Schur-Horn theorem. \\\par
In Theorem \ref{thm1} we chose the geometric mean $[\Delta(A_{11})\Delta(A_{22})]^{1/2}$ of $\Delta(A_{11})$ and $\Delta(A_{22})$ for our \q{diagonal}.
We could have chosen the arithmetic mean
 $$\Delta_c(A)=\frac{\Delta(A_{11})+\Delta(A_{22})}{2},$$
and there is a good reason for this alternative choice.
In many calculations with symplectic eigenvalues expressions like $\frac{1}{2}[\langle u,Au\rangle+\langle v,Av\rangle],$ where $u,v$ are vectors with $\langle u,Jv\rangle =1,$ play the same role as the \q{Rayleigh quotients} $\langle x,Ax\rangle$ with $\langle x,x\rangle=1$ in the ordinary eigenvalue problem. With this choice again we get a version of the Schur-Horn theorem:
\begin{thm}\label{thm3}
Let $A$ be any $2n\times 2n$ positive definite matrix. Then we have the weak majorisation
\begin{equation}
    \Delta_c(A)\prec^w d_s(A).
\end{equation} Conversely, if $x,y$ are two positive $n$-vectors with $x\prec^w y,$ then there exists a $2n\times 2n$ positive definite matrix $A$ such that $x=\Delta_c(A)$ and $y=d_s(A).$
\end{thm}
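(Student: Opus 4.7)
The plan is to follow the proof of Theorem~\ref{thm1} step by step, invoking the AM-GM inequality to obtain the forward direction from Theorem~\ref{thm1} essentially for free, and invoking a mild variant of the $SL(2,\mathbb{R})$ surjectivity argument for the converse.

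For the forward inequality, the coordinatewise AM-GM inequality gives $\Delta_c(A)\geq\Delta_s(A)$ entrywise. Entrywise domination of positive vectors passes to increasing rearrangements ($a\geq b$ coordinatewise implies $a_k^{\uparrow}\geq b_k^{\uparrow}$ by the min-max formula), so $\sum_{j=1}^{k}\Delta_c(A)_j^{\uparrow}\geq\sum_{j=1}^{k}\Delta_s(A)_j^{\uparrow}$ for $1\leq k\leq n$. Combining with $\Delta_s(A)\prec^w d_s(A)$ from Theorem~\ref{thm1}, we obtain $\Delta_c(A)\prec^w d_s(A)$.

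For the converse, given positive $x\prec^w y$, I would reuse the construction from the proof of Theorem~\ref{thm1} unchanged up to the auxiliary matrix $B=\begin{bmatrix}\Omega & 0\\ 0 & \Omega\end{bmatrix}\begin{bmatrix}Y & 0\\ 0 & Y\end{bmatrix}\begin{bmatrix}\Omega^T & 0\\ 0 & \Omega^T\end{bmatrix}$, where $z$ satisfies $z\prec y$ and $z\leq x$ via \eqref{eq6}, and $\Omega$ is the real orthogonal matrix from Horn's theorem with $\Delta(\Omega Y\Omega^T)=z$. Then $d_s(B)=y$, and because both diagonal blocks of $B$ equal $\Omega Y\Omega^T$, we have $\Delta_c(B)=z$. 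Conjugating by a symplectic $M=\begin{bmatrix}P & Q\\ R & S\end{bmatrix}$ with $P,Q,R,S$ diagonal and $PS-QR=I$, the same block computation as in the proof of Theorem~\ref{thm1} shows that the $j$-th entry of $\Delta_c(A)$ for $A=MBM^T$ equals $\tfrac{1}{2}(p_j^2+q_j^2+r_j^2+s_j^2)\,z_j$. Matching $\Delta_c(A)=x$ thus amounts to hitting the prescribed value $t_j=x_j/z_j\geq 1$ by the continuous function $h\left(\begin{bmatrix}p & q\\ r & s\end{bmatrix}\right)=\tfrac{1}{2}(p^2+q^2+r^2+s^2)$ on $SL(2,\mathbb{R})$.

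The only real point to verify is that the range of $h$ on $SL(2,\mathbb{R})$ is exactly $[1,\infty)$. For this I would note that $p^2+s^2\geq 2|ps|$ and $q^2+r^2\geq 2|qr|$, together with $ps-qr=1$, give $h\geq 1$ with $h(I)=1$; continuity of $h$ combined with the connectedness and unboundedness of $SL(2,\mathbb{R})$ then forces the range to be $[1,\infty)$. Since $t_j\geq 1$, this furnishes the required $P,Q,R,S$, and $A=MBM^T$ satisfies $d_s(A)=y$ and $\Delta_c(A)=x$. The main (and mild) obstacle is precisely this lower bound $h\geq 1$ on $SL(2,\mathbb{R})$; the rest is a verbatim transcription of the proof of Theorem~\ref{thm1}.
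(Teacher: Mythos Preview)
Your proof is correct and follows the paper's strategy almost verbatim. The forward direction is identical to the paper's (AM--GM gives $\Delta_s(A)\leq\Delta_c(A)$, then invoke Theorem~\ref{thm1}). For the converse, the only difference is in the choice of symplectic conjugator: you reuse the full family $M=\begin{bmatrix}P&Q\\ R&S\end{bmatrix}$ with diagonal blocks and analyze $h=\tfrac12(p^2+q^2+r^2+s^2)$ on $SL(2,\mathbb{R})$, whereas the paper simply takes the one-parameter diagonal symplectic $M_\alpha=\diag(\alpha_1,\ldots,\alpha_n,\alpha_1^{-1},\ldots,\alpha_n^{-1})$ and uses that $t\mapsto\tfrac12(t+t^{-1})$ maps $[1,\infty)$ onto $[1,\infty)$. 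Your argument is a faithful transcription of the Theorem~\ref{thm1} mechanism; the paper's is a slight specialization of it (your $P=\diag(\alpha)$, $Q=R=0$, $S=\diag(\alpha^{-1})$) that avoids the connectedness/unboundedness appeal by exhibiting an explicit monotone one-variable function. Either way the surjectivity onto $[1,\infty)$ is immediate, so the two proofs are effectively the same.
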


\begin{proof}
Since $\Delta_s(A)\leq \Delta_c(A),$ the weak majorisation (11) is a consequence of \eqref{eq5}. To prove the converse, proceed as in the proof of Theorem 1. Define the matrix $B$ exactly in the same way as was done there. Let $\alpha=(\alpha_1,\ldots,\alpha_n)$ be an $n$-vector with positive entries and let $M_\alpha$ be the $2n\times 2n$ diagonal matrix with diagonal entries $(\alpha_1,\ldots,\alpha_n,\alpha_{1}^{-1},\ldots,\alpha_{n}^{-1}).$ This is a symplectic matrix. Let $A_\alpha=M_{\alpha} BM_{\alpha}^T.$ Then $d_s(A_{\alpha})=y$ for all $\alpha,$ and $$\Delta(A_{\alpha})=(\alpha_1z_1,\ldots,\alpha_nz_n,\alpha_{1}^{-1}z_1,\ldots,\alpha_{n}^{-1}z_n).$$ Hence, $$\Delta_c(A_{\alpha})=\frac{1}{2}((\alpha_1+\alpha_{1}^{-1})z_1,\ldots,(\alpha_n+\alpha_{n}^{-1})z_n).$$
Now $z_j\leq x_j$ for all $j.$ The function $f(t)=\frac{1}{2}(t+t^{-1})$ takes the value $1$ at $t=1$ and increases monotonically in $[1,\infty).$ Hence for each $j,$ there exists $\alpha_j\geq 1$ such that $x_j=\frac{1}{2}(\alpha_j+\alpha_{j}^{-1})z_j.$ For this $\alpha,$ let $A_\alpha=A.$ Then $A$ is a $2n\times 2n$ positive definite matrix with $d_s(A)=y$ and $\Delta_c(A)=x.$ This completes the proof.   
\end{proof}
\vskip.2in

\end{document}